\documentclass[12pt]{amsart}
\usepackage{amssymb,amsmath,color}
\usepackage{dsfont}
\usepackage{enumitem,amsrefs,hyperref}
\usepackage{color}
\usepackage{tikz}
\usepackage{pgf}
\usepackage{bm}
\usepackage{tkz-graph}
\usetikzlibrary{calc, positioning}
\usepackage{fullpage}

\newtheorem{theorem}{Theorem}[section]

\newtheorem{lemma}[theorem]{Lemma}
\newtheorem{corollary}[theorem]{Corollary}
\newtheorem{proposition}[theorem]{Proposition}
\theoremstyle{definition}
\newtheorem{definition}[theorem]{Definition}
\newtheorem{example}[theorem]{Example}

\theoremstyle{remark}

\newcommand{\RR}{\mathbb{R}}

\newcommand{\A}{{\mathcal{A}}}

\newcommand{\Htwo}[3]{
\begin{tikzpicture}[baseline={([yshift=-.5ex]current bounding box.center)}]
\node[fill=black, circle, inner sep=1pt, minimum size=0.1cm, label=right:\footnotesize{#1}] (1) {};
\node[fill=black, circle, inner sep=1pt, minimum size=0.1cm, label=left:\footnotesize{#2}] (2) [below left = 0.3cm of 1] {};
\node[fill=black, circle, inner sep=1pt, minimum size=0.1cm, label=right:\footnotesize{#3}] (3) [below right =0.3cm of 1] {};
\draw (3)--(1)--(2);
\end{tikzpicture}}

\newcommand{\uHtwo}[0]{
\begin{tikzpicture}[baseline={([yshift=-.5ex]current bounding box.center)}]
\node[fill=black, circle, inner sep=1pt, minimum size=0.1cm] (1) {};
\node[fill=black, circle, inner sep=1pt, minimum size=0.1cm] (2) [below left = 0.3cm of 1] {};
\node[fill=black, circle, inner sep=1pt, minimum size=0.1cm] (3) [below right =0.3cm of 1] {};
\draw (3)--(1)--(2);
\end{tikzpicture}}

\newcommand{\threestarplus}[4]{
\begin{tikzpicture}[baseline={([yshift=-.5ex]current bounding box.center)}]
\node[fill=black, circle, inner sep=1pt, minimum size=0.1cm, label=right:\footnotesize{#1}] (1) {};
\node[fill=black, circle, inner sep=1pt, minimum size=0.1cm, label=left:\footnotesize{#2}] (2) [above = 0.3cm of 1] {};
\node[fill=black, circle, inner sep=1pt, minimum size=0.1cm, label=left:\footnotesize{#3}] (3) [below left = 0.3cm of 1] {};
\node[fill=black, circle, inner sep=1pt, minimum size=0.1cm,label=right:\footnotesize{#4}] (4) [below right = 0.3cm of 1] {};
\draw (2)--(1)--(3)--(1)--(4);
\draw (3)--(4);
\end{tikzpicture}}

\newcommand{\uthreestar}[0]{
\begin{tikzpicture}[baseline={([yshift=-.5ex]current bounding box.center)}]
\node[fill=black, circle, inner sep=1pt, minimum size=0.1cm] (1) {};
\node[fill=black, circle, inner sep=1pt, minimum size=0.1cm] (2) [above = 0.3cm of 1] {};
\node[fill=black, circle, inner sep=1pt, minimum size=0.1cm] (3) [below left = 0.3cm of 1] {};
\node[fill=black, circle, inner sep=1pt, minimum size=0.1cm] (4) [below right = 0.3cm of 1] {};
\draw (2)--(1)--(3)--(1)--(4);
\end{tikzpicture}}

\newcommand{\vedge}[2]{
\begin{tikzpicture}[baseline={([yshift=-.5ex]current bounding box.center)}]
\node[fill=black, circle, inner sep=1pt, minimum size=0.1cm,label=right:\footnotesize{#1}] (1) {};
\node[fill=black, circle, inner sep=1pt, minimum size=0.1cm,label=right:\footnotesize{#2}] (2) [below = 0.3cm of 1] {};
\draw (2)--(1);
\end{tikzpicture}}

\newcommand{\uvedge}[0]{
\begin{tikzpicture}[baseline={([yshift=-.5ex]current bounding box.center)}]
\node[fill=black, circle, inner sep=1pt, minimum size=0.1cm] (1) {};
\node[fill=black, circle, inner sep=1pt, minimum size=0.1cm] (2) [below = 0.3cm of 1] {};
\node[draw=none, fill=none, circle, inner sep=1pt, minimum size=0.1cm] (4) [left = 0.05cm of 1] {};
\draw (2)--(1);
\end{tikzpicture}}

\newcommand{\upthree}[0]{
\begin{tikzpicture}[baseline={([yshift=-.5ex]current bounding box.center)}]
\node[fill=black, circle, inner sep=1pt, minimum size=0.1cm] (1) {};
\node[fill=black, circle, inner sep=1pt, minimum size=0.1cm] (2) [above = 0.3cm of 1] {};
\node[fill=black, circle, inner sep=1pt, minimum size=0.1cm] (3) [right = 0.3cm of 2] {};
\node[fill=black, circle, inner sep=1pt, minimum size=0.1cm] (4) [below = 0.3cm of 3] {};
\draw (1)--(2)--(3)--(4);
\end{tikzpicture}}

\newcommand{\sidconj}[0]{
\begin{tikzpicture}[baseline={([yshift=-.5ex]current bounding box.center)}]
\tikzstyle{every node}=[fill=black, circle, inner sep=1pt, minimum size=0.1cm]
\draw (1*360/10: 1.5cm) node (1) [label=above right:$e$] {};
\draw (2*360/10: 1.5cm) node (2) [label=above:$d$] {};
\draw (3*360/10: 1.5cm) node (3) [label=above:$c$] {};
\draw (4*360/10: 1.5cm) node (4) [label=above left:$b$] {};
\draw (5*360/10: 1.5cm) node (5) [label=left:$a$] {};
\draw (6*360/10: 1.5cm) node (6) [label=below left:$j$] {};
\draw (7*360/10: 1.5cm) node (7) [label=below:$i$] {};
\draw (8*360/10: 1.5cm) node (8) [label=below:$h$] {};
\draw (9*360/10: 1.5cm) node (9) [label=below right:$g$] {};
\draw (10*360/10: 1.5cm) node (10) [label=right:$f$] {};
\draw (1)--(2)--(3)--(4)--(5)--(6)--(7)--(8)--(9)--(10)--(1);
\draw (1)--(6);
\draw (2)--(7);
\draw (3)--(8);
\draw (4)--(9);
\draw (5)--(10);
\end{tikzpicture}}

\newcommand{\bottomlesshouse}[5]{
\begin{tikzpicture}[baseline={([yshift=-.5ex]current bounding box.center)}]
\node[fill=black, circle, inner sep=1pt, minimum size=0.1cm, label=right:\footnotesize{#1}] (1) {};
\node[fill=black, circle, inner sep=1pt, minimum size=0.1cm, label=left:\footnotesize{#2}] (2) [below left = 0.3cm of 1] {};
\node[fill=black, circle, inner sep=1pt, minimum size=0.1cm, label=right:\footnotesize{#3}] (3) [below right = 0.3cm of 1] {};
\node[fill=black, circle, inner sep=1pt, minimum size=0.1cm, label=left:\footnotesize{#4}] (4) [below = 0.3cm of 2] {};
\node[fill=black, circle, inner sep=1pt, minimum size=0.1cm, label=right:\footnotesize{#5}] (5) [below = 0.3cm of 3] {};
\draw (1)--(3)--(2)--(1);
\draw (2)--(4);
\draw (3)--(5);
\end{tikzpicture}}

\date{\today}

\title[Simple Graph Density Inequalities without Sos Proofs]{Simple Graph Density Inequalities\\ with no Sum of Squares Proofs}
\thanks{Grigoriy Blekherman was partially supported by NSF grant DMS-1352073. This material is partially based upon work supported by the National Science Foundation under Grant No. 1440140, while Annie Raymond and Rekha Thomas were in residence at the Mathematical Sciences Research Institute in Berkeley, California, during the fall of 2017. 
Mohit Singh was partially supposed by NSF grant CCF-1717947 and Rekha Thomas was partially supported by 
NSF grant DMS-1719538.}

\author{Grigoriy Blekherman}
\address{School of Mathematics, Georgia Institute of Technology,
686 Cherry Street
Atlanta, GA 30332}\email{greg@math.gatech.edu}

\author{Annie Raymond}
\address{Department of Mathematics and Statistics,
Lederle Graduate Research Tower, 1623D,
University of Massachusetts Amherst
710 N. Pleasant Street
Amherst, MA 01003} \email{raymond@math.umass.edu}

\author{Mohit Singh}
\address{H. Milton Stewart School of
Industrial and Systems Engineering, Georgia Institute of Technology,
755 Ferst Drive, NW, Atlanta, GA 30332}
\email{mohitsinghr@gmail.com}

\author{Rekha R. Thomas}
\address{Department of Mathematics, University of Washington, Box
  354350, Seattle, WA 98195, USA} \email{rrthomas@uw.edu}

\begin{document}

\begin{abstract}
Establishing inequalities among graph densities is a central
pursuit in extremal combinatorics. A standard tool to certify the nonnegativity of a graph density expression
is to write it as a sum of squares. In this paper, we identify a simple condition under which a graph density expression cannot be a sum of squares. Using this result, we prove that the
Blakley-Roy inequality does not have a sum of squares certificate when the path length is odd. 
We also show that the same Blakley-Roy inequalities cannot be certified by sums of squares using a multiplier of the form one plus a sum of squares. These results answer two questions raised by Lov\'asz. Our main tool is used again to show that the smallest open case of Sidorenko's conjectured inequality cannot be certified by a sum of squares.
Finally, we show that our setup is equivalent to existing frameworks by Razborov and Lov\'asz-Szegedy, and thus our results hold in these settings too.\end{abstract}

\maketitle

\section{Introduction}

A graph $G$ has vertex set $V(G)$ and edge set $E(G)$. All graphs are assumed to be simple, without loops or multiple edges. The \emph{homomorphism density} of a graph $H$ in a graph $G$, denoted by $t(H;G)$, is the probability that a random map from $V(H)$ to $V(G)$ is a graph homomorphism, i.e., it maps every edge of $H$ to an edge of $G$.  An inequality between homomorphism densities refers to an inequality between $t(H_i;G)$, for some finite graphs $H_i$, that is valid for all graphs $G$.

Many results and problems in extremal graph theory can be restated as inequalities between homomorphism densities~\cite{LovaszBook,Razborov07}. The Cauchy-Schwarz inequality has been one of the powerful tools used to verify density inequalities for graphs and hypergraphs~\cites{MR3002572,LovaszBook, MR2680226,MR3007147}.  This proof method is equivalent to the general sum of squares (sos) proof method that has been widely used in optimization~\cite{blekherman2012semidefinite}. Moreover, sos proofs naturally yield to a computerized search via semidefinite programming. It was shown in ~\cite{MR2921000} that every true inequality between homomorphism densities is a limit of Cauchy-Schwarz inequalities.

On the other hand, Hatami and Norine~\cite{HN11} show significant computational limitations on verifying inequalities between homomorphism densities.
Firstly, they show that the problem of verifying the validity of an inequality between homomorphism densities is undecidable. Moreover, they also show that there are valid linear inequalities between graph  homomorphism densities that do not have a finite sos proof.

Despite the above negative results, the limitations of the sos proof method in proving a particular graph density inequality of interest has been unclear. The examples arising from \cite{HN11} do not shed much light on natural graph density inequalities in extremal graph theory.

In this paper, we give a simple criterion that rules out sos proofs for the validity of a given graph density inequality. As a corollary of our method, we obtain that certain classical graph density inequalities cannot be proven via the sos method. Moreover, we also show that the smallest unresolved instance of the celebrated Sidorenko's conjecture cannot be resolved via the sos method.

 To describe our results, we begin with a few definitions about the gluing algebra of graphs. We refer the reader to Lov\'{a}sz~\cite{LovaszBook} for a broader exposition. A graph is {\em partially labeled} if a subset of its vertices are labeled with elements of $\mathbb{N} := \{1,2,3,\ldots\}$ such that no vertex receives more than one label. If no
vertices of $H$ are labeled then $H$ is {\em unlabeled}.

Let $\A$ denote the vector space of all formal finite $\RR$-linear combinations of partially labeled graphs without isolated vertices, including
the empty graph with no vertices which we denote as $1$. We call an element $a = \sum \alpha_i H_i$ of $\A$ a {\em graph combination}, each $\alpha_i H_i$ a {\em term} of $a$, and each $H_i$ a {\em constituent graph}  of $a$.  The {\em degree} of a term $\alpha_i H_i$, $\alpha_i \neq 0$, is the number of edges in $H_i$. We say that $a$ is {\em homogeneous} of degree $d$ if all its terms have degree $d$.

Let $\A_\emptyset$ denote the subalgebra of $\A$ spanned by unlabeled graphs. We view elements $a \in \A_\emptyset$ as functions that can be evaluated on unlabeled graphs $G$ via homomorphism densities. An element $a = \sum \alpha_i H_i$ of $\A_\emptyset $ is called nonnegative if $ \sum \alpha_i t(H_i;G)\geq 0$ for all graphs $G$.

The vector space $\A$ has a product defined as follows. For two labeled graphs   $H_1$ and $H_2$, form the new labeled graph $H_1H_2$  by gluing together the vertices in the two graphs with the same label, and keeping only one copy of any edge that may have doubled in the process. Equipped with this product, $\A$ becomes an $\RR$-algebra with the empty graph as its multiplicative identity.

The algebra $\A$ admits a simple linear map into $\A_\emptyset$ that removes the labels in a graph combination to create a graph combination of unlabeled graphs. We call this map {\em unlabeling} and denote it by $[[ \cdot ]]$. 

A {\em sum of squares (sos)} in $\A_\emptyset$ is a finite sum of unlabeled squares of graph combinations $a_i \in \A$, namely,
$\sum [[ a_i^2 ]]$. It can be easily seen that a sos is a nonnegative graph combination.

\begin{definition} \label{def:trivial square}
An unlabeled graph $F$ is called a {\em trivial square} if whenever $F = [[H^2]]$ for some labeled graph $H$, then $H$ is a fully labeled copy of $F$.
\end{definition}

In Lemma~\ref{lem:involution}, we give a characterization of trivial squares in terms of automorphisms of the underlying graph. Our main result is the following theorem that gives a sufficient condition for when a graph combination is not a sos.

\begin{theorem} \label{thm:main1}
Let $f = \sum_{s=1}^t \lambda_s F_s$ be a graph combination of unlabeled graphs $F_s$ and $d_{\text{min}}$ be the minimum degree of any $F_s$. Suppose $f$ satisfies the following conditions:
\begin{enumerate}
  \item there exists an $s$ such that the degree of $F_s$ is equal to $d_{\text{min}}$ and $\lambda_s<0$, and
    \item for every $s$ such that degree of $F_s$ equals $d_{\text{min}}$ and $\lambda_s>0$, $F_s$ is a trivial square.
\end{enumerate} Then $f$ is a not a sos.
\end{theorem}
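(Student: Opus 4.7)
The plan is to argue by contradiction: suppose $f = \sum_{i=1}^m [[a_i^2]]$ with $a_i = \sum_j \beta_{ij} H_{ij} \in \A$, and extract a contradiction from the degree-$d_{\min}$ homogeneous component. The starting point is the degree identity $\deg(HH') = \deg H + \deg H' - c(H,H')$, where $c(H,H')$ counts edges between vertices sharing a label in both graphs that are present in both $H$ and $H'$; hence $\deg(HH') \ge \max(\deg H,\deg H')$, with equality only when the smaller-degree graph is fully labeled and all its edges lie inside the larger. A consequence is that $\deg[[H^2]] = \deg H$ precisely when $H$ is fully labeled, and only pairs $(H_{ij}, H_{ik})$ with both graphs of degree at most $d_{\min}$ contribute to the degree-$\le d_{\min}$ part of $\sum_i [[a_i^2]]$.

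Next I would exploit that $f$ has no terms of degree $<d_{\min}$, cascading vanishings on the $\beta_{ij}$'s degree by degree. At the base, the coefficient of the empty graph in $\sum_i [[a_i^2]]$ is $\sum_i \beta_{i,\mathbf{1}}^2$, forcing $\beta_{i,\mathbf{1}}=0$. Inducting on $d = 1, \ldots, d_{\min}-1$, the vanishing of the coefficient of every unlabeled degree-$d$ graph in $\sum_i [[a_i^2]]$, combined with the degree identity and previous vanishings, forces every fully labeled constituent of $a_i$ of degree $<d_{\min}$ to satisfy $\beta_{ij}=0$, and more generally every $H_{ij}$ with $[[H_{ij}^2]]$ of degree strictly less than $d_{\min}$. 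The key point is that at each degree $d$ the cross-term contributions to the coefficient are eliminated by prior vanishings, so the residual quadratic form is a plain sum of squares.

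With these vanishings in hand, I would compute the coefficient of any trivial square $F$ of degree $d_{\min}$ in $\sum_i [[a_i^2]]$. Diagonal contributions $\beta_{ij}^2$ come from $H_{ij}$ with $[[H_{ij}^2]] = F$ and $\deg H_{ij}^2 = d_{\min}$, and the definition of trivial square forces every such $H_{ij}$ to be a fully labeled copy of $F$. Cross contributions $2\beta_{ij}\beta_{ik}$ with $H_{ij}\ne H_{ik}$ and $[[H_{ij}H_{ik}]] = F$ at degree $d_{\min}$ require, from the degree identity, one side to be a fully labeled graph of degree $<d_{\min}$, whose $\beta$ has been forced to zero. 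Hence the coefficient of every trivial square of degree $d_{\min}$ in $f$ is a nonnegative sum of squares, which immediately contradicts $\lambda_{s^\ast}<0$ whenever the $F_{s^\ast}$ given by (1) is itself a trivial square.

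The remaining case, where $F_{s^\ast}$ is not a trivial square, is the central technical difficulty and is where (2) enters. Each surviving cross contribution $2\beta_{H_1}\beta_{H_2}$ to the coefficient of $F_{s^\ast}$ must have both $H_1, H_2$ partially labeled (else the earlier cascade has killed it), and it is accompanied in the sos representation by diagonal contributions $\beta_{H_1}^2$ and $\beta_{H_2}^2$ to the coefficients of the unlabeled graphs $[[H_1^2]]$ and $[[H_2^2]]$ at degree $d_{\min}$, which are non-trivial squares. Under (2) those coefficients are $\le 0$ in $f$, while the sos representation bounds them below by a sum of such $\beta^2$'s, squeezing them to zero and forcing the offending $\beta$'s to vanish. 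Iterating this squeeze removes every surviving cross contribution to $F_{s^\ast}$, reducing its coefficient to a nonnegative sum of squares and contradicting $\lambda_{s^\ast}<0$. The hardest part is to make this iterative propagation precise and show that it terminates, with every new non-trivial square exposed by a vanishing being again covered by (2).
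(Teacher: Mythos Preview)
The claim in your third paragraph---that cross contributions to a trivial square $F$ at degree $d_{\min}$ must have one side fully labeled of degree $<d_{\min}$---is false. Take $F=P_3$: with $H_1$ the path on edges $\{12,2a\}$ and $H_2$ the path on edges $\{1b,12\}$ (vertices $1,2$ labeled, $a,b$ unlabeled) we have $[[H_1H_2]]=P_3$, yet both $H_1,H_2$ are strictly partially labeled with $\deg[[H_1^2]]=\deg[[H_2^2]]=3$, so neither is killed by your cascade. The coefficient of a trivial square can therefore receive genuine cross terms from constituents that survive.

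The same obstruction blocks the ``squeeze'' of your last paragraph. The coefficient in $\sum[[a_i^2]]$ of a non-trivial square $G$ of degree $d_{\min}$ is \emph{not} bounded below by $\sum_{[[H_{ij}^2]]=G}\beta_{ij}^2$: it also contains cross terms $2\beta_{ij}\beta_{ik}$ with $[[H_{ij}H_{ik}]]=G$ and $H_{ij}\neq H_{ik}$, which may be negative. Knowing that this coefficient is $\le 0$ does not force any individual $\beta$ to vanish, and your iteration has no place to start. (A smaller but related issue: even in the cascade of your second paragraph, the degree-$d$ residual is not a diagonal form $\sum\beta_{ij}^2$ but a sos graph combination $\sum[[c_i^2]]$; concluding the $c_i$ vanish already needs the nontrivial fact that $[[a^2]]=0\Rightarrow a=0$.)

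The paper supplies the missing anchor via a separating-hyperplane argument (Lemma~\ref{lem:square in lowest degree}). One shows that the cone generated by all $[[(F-H)^2]]$ of degree $d_{\min}$ is pointed (Lemma~\ref{lem:nonzero binomial square and pointed cone}), then picks a generic linear functional $L$ strictly positive on it and separating the finitely many degree-$d_{\min}$ graphs. The $L$-maximal graph among all $[[H_{ij}H_{ik}]]$ of minimal degree is then forced to arise only as a pure square $[[H_{ij}^2]]$, since otherwise $L([[(H_{ij}-H_{ik})^2]])>0$ would yield an $L$-larger square. This single pure-square term, with strictly positive coefficient, is what launches the peeling-off you sketch: hypothesis~(2) makes it a trivial square, so $H_{ij}$ is fully labeled and sits alone in its $a_i$, and it can be subtracted while preserving the sos. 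Iterating removes every positive-coefficient term and leaves a nonzero sos with only negative coefficients, contradicting the lemma itself.
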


As a first application of our theorem, we consider the Blakley-Roy inequality~\cite{blakley1965holder}. Let $P_k$ denote the path of length $k$ and $e$ denote the graph with a single edge. Then the Blakley-Roy inequality asserts that for every $k$, the combination $P_k-e^k$ is nonnegative. Indeed various proofs of this inequality have been obtained, for instance~\cite{blakley1965holder,li2011logarithimic}. We show the following result.

\begin{corollary} \label{cor:BR}
For any odd integer $k\geq 3$, $P_k$ is a trivial square. Therefore, for every odd $k\geq 3$ and for all $\lambda\in \mathbb{R}$, $\lambda P_k-e^k$ is not a sos.
\end{corollary}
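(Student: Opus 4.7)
The proof has two parts: first establish that $P_k$ is a trivial square whenever $k$ is odd and at least $3$, and then apply Theorem~\ref{thm:main1} to $f=\lambda P_k-e^k$.

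For the trivial square assertion, suppose $P_k=[[H^2]]$ for some labeled graph $H$ with $\ell$ labeled vertices and $m$ unlabeled vertices. In the gluing $H^2$ the two copies of $H$ are identified along the labeled vertices while the $2m$ copies of the unlabeled vertices remain distinct, so $H^2$ carries a canonical involution $\sigma$ that fixes each labeled vertex and swaps the two copies of every unlabeled vertex. After unlabeling, $\sigma$ descends to an automorphism of $P_k$. Now $\mathrm{Aut}(P_k)$ has exactly two elements: the identity and the reflection $\rho:v_i\mapsto v_{k-i}$. When $k$ is odd, $\rho$ has no fixed vertex on $P_k$; if $\sigma=\rho$, then $\ell=0$, whence $H^2=H\sqcup H$ is disconnected, contradicting the connectedness of $P_k$. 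Therefore $\sigma=\mathrm{id}$, which forces $m=0$, so $H$ is a fully labeled copy of $P_k$. This verifies that $P_k$ is a trivial square.

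For the sos assertion, observe that the $k$-fold product $e^k$ in $\A_\emptyset$ has no common labels to glue and therefore unfolds to the matching $kK_2$ consisting of $k$ disjoint edges. Hence both constituent graphs $P_k$ and $e^k$ of $f=\lambda P_k-e^k$ have exactly $k$ edges, so $d_{\min}=k$. Condition~(1) of Theorem~\ref{thm:main1} holds because the coefficient $-1$ of $e^k$ is negative. For condition~(2), when $\lambda\le 0$ there is no positive-coefficient term at degree $d_{\min}$ and the condition is vacuous, while when $\lambda>0$ the only such term is $\lambda P_k$, which is a trivial square by the first part. Either way, Theorem~\ref{thm:main1} rules out a sum-of-squares representation of $f$.

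The main obstacle is the trivial square step: one must identify the canonical involution attached to any decomposition $P_k=[[H^2]]$ and eliminate the only non-identity candidate using both the parity of $k$ (to make $\rho$ fixed-point free) and the connectedness of $P_k$ (to preclude $H$ being fully unlabeled). The subsequent verification of the hypotheses of Theorem~\ref{thm:main1} is a routine bookkeeping check.
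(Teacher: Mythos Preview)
Your proof is correct and follows essentially the same approach as the paper: you compute $\mathrm{Aut}(P_k)$, rule out the reflection because it has no fixed vertex when $k$ is odd (which is exactly the content of Example~\ref{ex:paths} via Lemma~\ref{lem:involution}), and then invoke Theorem~\ref{thm:main1}. The only cosmetic difference is that you treat all $\lambda$ uniformly by noting that condition~(2) of Theorem~\ref{thm:main1} is vacuous when $\lambda\le 0$, whereas the paper handles $\lambda\le 0$ separately by a direct appeal to Lemma~\ref{lem:square in lowest degree}.
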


The above result answers Question 17(b) in Lov\'{a}sz~\cite{LovaszOpenProblems} which asked whether the  Blakley-Roy inequality has a sos proof. 
In ~\cite{LovaszOpenProblems} Lov\'{a}sz also considered a more general certificate of nonnegativity: it is easy to see that $f$ is a nonnegative graph combination if there exists a sos graph combination $g$ such that $f(1+g)$ is sos.
Theorem~\ref{thm:main1} shows that for homogeneous graph combinations, such nonnegativity certificates are no more powerful than usual sos. In particular, the Blakley-Roy inequality for odd paths cannot be certified in this way.

\begin{corollary} \label{cor:multipliers}
For any $\lambda\in \mathbb{R}$ and for any $k \geq 3$ and odd, there is no sos $g$ such that $(\lambda P_k - e^k)(1+g)$ is a sos.
\end{corollary}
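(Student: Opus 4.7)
The plan is to apply Theorem~\ref{thm:main1} directly to $f = (\lambda P_k - e^k)(1+g)$ and show that its minimum-degree part inherits the hypotheses from $\lambda P_k - e^k$ itself. The first preparation is to decompose any candidate multiplier as $g = c + g'$, where $c \in \RR$ is the coefficient of the empty graph and $g'$ is supported on graphs with at least one edge. If $g$ is a sos, then $c \geq 0$: for any $a \in \A$ written as $a = c_a \cdot 1 + a''$ with $a''$ supported on nonempty graphs, the gluing product of any two graphs in $a''$ still has at least one edge, so the empty-graph coefficient of $[[a^2]]$ is exactly $c_a^2 \geq 0$. Summing over the squares comprising $g$ gives $c \geq 0$, hence $1+c > 0$.

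Next, I would expand
\[
f = (1+c)(\lambda P_k - e^k) + (\lambda P_k - e^k)\, g'.
\]
Because every graph in the support of $g'$ has at least one edge, every graph appearing in $(\lambda P_k - e^k)\, g'$ has at least $k+1$ edges. Consequently the minimum degree of $f$ is exactly $k$, and the degree-$k$ homogeneous part of $f$ is precisely $(1+c)\lambda P_k - (1+c)\,e^k$.

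It then remains to verify the two hypotheses of Theorem~\ref{thm:main1} for $f$. The graph $e^k$ has degree $k$ and coefficient $-(1+c) < 0$, establishing hypothesis~(1). For hypothesis~(2), the only other degree-$k$ graph in $f$ is $P_k$: if $\lambda \leq 0$ no degree-$k$ graph has positive coefficient and~(2) is vacuous, while if $\lambda > 0$ then $P_k$ occurs with positive coefficient $(1+c)\lambda$, and Corollary~\ref{cor:BR} tells us $P_k$ is a trivial square for odd $k \geq 3$. In either case, Theorem~\ref{thm:main1} yields that $f$ is not a sos. The only subtle ingredient beyond Theorem~\ref{thm:main1} and Corollary~\ref{cor:BR} is the nonnegativity of the constant part of a sos, which is the technical point that lets the degree-$k$ part of the product retain the sign pattern of $\lambda P_k - e^k$ (up to the positive scalar $1+c$); everything else is a direct invocation of the two earlier results.
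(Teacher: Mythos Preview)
Your proof is correct and essentially mirrors the paper's: both identify the minimum-degree part of $(\lambda P_k - e^k)(1+g)$ as a positive scalar multiple of $\lambda P_k - e^k$ and conclude it cannot be a sos (the paper via Corollary~\ref{cor:lowest degree part is sos} together with Corollary~\ref{cor:BR}, you via Theorem~\ref{thm:main1} applied directly to the product, which unwinds to the same thing). Your explicit handling of the constant term $c$ of $g$ is in fact more careful than the paper's own proof, which simply asserts the lowest-degree part is ``precisely $f$'' without isolating the factor $1+c$.
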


This resolves question 21 of ~\cite{LovaszOpenProblems} which asked for an explicit example of a valid homomorphism density inequality without such multiplicative certificates. The existence of such inequalities already followed from the undecidability result of \cite{HN11}.

As a final corollary, we consider Sidorenko's conjecture~\cite{Sid93} that states that for every bipartite graph $H$, the graph combination $H-e^{|E(H)|}$ is nonnegative. A special case of this conjecture, which is known to be true, is the Blakely-Roy inequality. 
While it has been verified for various graph families, the smallest $H$ for which the conjecture remains open is $H=K_{5,5}\setminus C_{10}$ where $K_{5,5}$ is the complete bipartite graph where both parts contain five vertices, and $C_{10}$ is a Hamiltonian cycle with $10$ vertices \cite{conlon2018sidorenko}. We show that Theorem~\ref{thm:main1}
 implies that the above inequality cannot be resolved using the sos proof 
 method.

\begin{corollary} \label{cor:sidorenko}
If $H=K_{5,5}\setminus C_{10}$, then $H$ is a trivial square, and $H-e^{15}$ is not a sos. Moreover, there is no sos $g$ such that $(H-e^{15})(1+g)$ is a sos. 
\end{corollary}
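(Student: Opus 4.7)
The plan is to deduce both claims from Theorem~\ref{thm:main1} once we show that $H = K_{5,5} \setminus C_{10}$ is a trivial square. Label the two sides so that $C_{10} = a_1 b_1 a_2 b_2 \cdots a_5 b_5 a_1$; then $H$ has $15$ edges, and $a_i \sim b_j$ if and only if $j - i \in \{1,2,3\} \pmod{5}$.

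The heart of the proof is the first assertion, that $H$ is a trivial square. By Lemma~\ref{lem:involution} it suffices to show that the only involution of $H$ arising from an $[[K^2]]$-decomposition (for some partially labeled $K$) is the identity. I first enumerate $\mathrm{Aut}(H)$: side-preserving automorphisms are generated by the order-$5$ rotation $\rho \colon (a_i, b_i) \mapsto (a_{i+1}, b_{i+1})$ together with the reflection $(a_i, b_j) \mapsto (a_{-i}, b_{-j-1})$, and side-swapping automorphisms are maps $a_i \mapsto b_{f(i)}$, $b_j \mapsto a_{f^{-1}(j)}$ for appropriate affine $f$. Each non-identity involution is then checked: a side-swapping involution such as $a_i \mapsto b_{i+2}$, $b_j \mapsto a_{j+3}$ has no fixed vertex but every $a_i$ is adjacent to its image $b_{i+2}$ (since $(i+2) - i = 2 \in \{1,2,3\}$), creating a forbidden ``swap edge,'' and the remaining side-swapping involutions are handled by the same mod-$5$ calculation; a side-preserving reflection $a_i \mapsto a_{-i+b}$, $b_j \mapsto b_{-j+b-1}$ produces no swap edges by bipartiteness, but the pair $u = a_0$, $v = b_2$ (with $b=0$) witnesses $\{u,v\}$ and $\{u, \tau(v)\} = \{a_0, b_3\}$ both being edges of $H$, forcing a cross-copy edge absent from any $K^2$; rotational symmetry eliminates the remaining four cases. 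This combinatorial case analysis is the delicate step.

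Granted that $H$ is a trivial square, Theorem~\ref{thm:main1} applied to $f = H - e^{15}$ yields that $H - e^{15}$ is not a sos: both constituent graphs have $15$ edges, so $d_{\min} = 15$; $e^{15}$ has coefficient $-1 < 0$, verifying condition~(1); and the only degree-$15$ constituent with positive coefficient is $H$, which is a trivial square, verifying condition~(2).

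For the multiplier assertion, suppose toward a contradiction that some sos $g$ makes $(H - e^{15})(1+g)$ a sos. Products of unlabeled graphs in $\A_\emptyset$ are disjoint unions, so edge-counts add under multiplication; since every graph in $\A_\emptyset$ has no isolated vertices, the only constituent of $g$ with $0$ edges is the empty graph $1$, whose coefficient $c = \sum \alpha_i^2 \geq 0$ (writing $g = \sum [[a_i^2]]$ with $\alpha_i$ the empty-graph component of $a_i$) is nonnegative. Hence the degree-$15$ part of $(H - e^{15})(1+g)$ is $(1+c)(H - e^{15})$, and all other constituents have degree $> 15$. With $1 + c > 0$, both conditions of Theorem~\ref{thm:main1} remain satisfied for $(H - e^{15})(1 + g)$, contradicting the assumption.
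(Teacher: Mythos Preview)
Your overall strategy---enumerate the involutions in $\mathrm{Aut}(H)$ and show each violates one of conditions (1)--(3) of Lemma~\ref{lem:involution}---matches the paper's, and your multiplier argument via Theorem~\ref{thm:main1} is correct. But the side-preserving reflection case contains a real error. With $b=0$ your reflection is $\tau\colon a_i \mapsto a_{-i}$, $b_j \mapsto b_{-j-1}$, and your witness is $u=a_0$, $v=b_2$; however $\tau(b_2)=b_{-3}=b_2$, so $v$ is \emph{fixed} and your claim $\tau(v)=b_3$ is simply wrong. More seriously, even with an unfixed $v$, the argument fails whenever $u$ is fixed: condition~(3) of Lemma~\ref{lem:involution} only constrains edges among \emph{unfixed} vertices, and in any $[[K^2]]$ a labeled (fixed) vertex is automatically adjacent to both copies of each unlabeled neighbor, so having $\{u,v\}$ and $\{u,\tau(v)\}$ both be edges is no obstruction at all. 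A correct witness needs both endpoints unfixed---for instance $u=a_2$, $v=b_4$, where $\tau(b_4)=b_0$ and both $\{a_2,b_4\}$ and $\{a_2,b_0\}$ are edges of $H$, forcing the pair $b_0,b_4$ into the same group in any putative partition, which is impossible. The paper argues this case differently and more globally: in its labeling the perimeter and diagonal edges force all eight unfixed vertices into a single group, so no valid two-group partition exists.

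A secondary point: for side-swapping involutions your claim that ``every $a_i$ is adjacent to its image'' holds for the translation $a_i\mapsto b_{i+2}$ but fails for the five reflection-type involutions $a_i\mapsto b_{-i+c}$, $b_j\mapsto a_{-j+c}$ (there $a_i\sim b_{-i+c}$ only when $c-2i\in\{1,2,3\}$). This gap is harmless, though, since every side-swapping involution fixes no vertex and hence already violates condition~(2); the swap-edge remark is unnecessary.
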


Our main technical tool is Lemma \ref{lem:square in lowest degree}, which shows that for any sos  $f = \sum \lambda_s F_s = \sum [[a_i^2]]$ with $a_i = \sum \alpha_{ij} H_{ij}$, there exists a term $\lambda_s F_s$ of minimal degree in $f$ such that $F_s$ only
arises as a square $[[H_{ij}^2]]$, and consequently, $\lambda_s > 0$. From this, we derive Theorem \ref{thm:homogeneous sos}, which shows that in decomposing homogeneous graph combinations as sums of squares, we are severely restricted in the types of graphs that can be used in the underlying squares. In forthcoming work we will show how these restrictions can be used to classify all homogeneous sums of squares of degrees 3 and 4 \cite{BRSThom}.

This paper is organized as follows. In Section \ref{sec:gluing} we prove our main results on sums of squares in the gluing algebra $\mathcal{A}$. In Section \ref{sec:translation} we discuss the relation between our gluing algebra and the Cauchy-Schwarz calculus of Razborov as well as the very closely related gluing algebras of Lov\'{a}sz-Szegedy. These connections prove that the three results presented in Corollaries~\ref{cor:BR}, \ref{cor:multipliers} and \ref{cor:sidorenko} also hold in any of these settings.

{\bf Acknowledgments.} We thank Prasad Tetali for bringing to our attention the smallest open case of Sidorenko's conjecture. We also want to thank Alexander Razborov for useful discussions about this paper.


\section{The Gluing Algebra and its Sums of Squares}\label{sec:gluing}

Recall the algebra $\mathcal{A}$ from the introduction spanned by partially labeled graphs as a $\mathbb{R}$-vector space. We call
$\mathcal{A}$ a gluing algebra since multiplication in it works by gluing graphs along vertices with the
same labels. For example, $\threestarplus{3}{}{1}{2} \cdot \Htwo{1}{2}{}= \bottomlesshouse{2}{3}{1}{}{}$.
For a fixed finite set of labels $L \subset \mathbb{N}$, let $\A_L$ denote the subalgebra of $\A$ spanned by all
graphs whose label sets are contained in $L$. Then $\A_\emptyset$ is the subalgebra of $\A$ spanned by unlabeled graphs.

\begin{lemma} \label{lem:degrees are same}
Let $H_1$ and $H_2$ be two partially labeled graphs such that $$\deg(H_1 H_2) \leq \min \{ \deg(H_1^2), \deg(H_2^2) \}.$$ Then $\deg(H_1 H_2) = \deg (H_1^2) = \deg(H_2^2)$. Further, $\deg(H_1) = \deg(H_2)$ and
$H_1$ and $H_2$ have the same set of fully labeled edges.
\end{lemma}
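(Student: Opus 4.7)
The plan is to reduce the lemma to simple arithmetic on edge counts. For a partially labeled graph $H$, let $F(H)$ denote the set of its \emph{fully labeled} edges (edges whose both endpoints carry labels), encoded as unordered pairs of labels. When we form the product $H_1 H_2$ by gluing, only labeled vertices are identified, so the only way an edge of $H_1$ and an edge of $H_2$ can collapse into a single edge is if they are both fully labeled and carry the same pair of labels. This observation yields the counting formulas
\begin{align*}
\deg(H_1 H_2) &= \deg(H_1) + \deg(H_2) - |F(H_1) \cap F(H_2)|,\\
\deg(H_i^2) &= 2\deg(H_i) - |F(H_i)| \qquad (i = 1,2),
\end{align*}
the second being the special case in which every fully labeled edge of $H_i$ collides with its copy.

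With the formulas in place, everything follows from elementary manipulation. Substituting into $\deg(H_1 H_2) \leq \deg(H_1^2)$ and using $|F(H_1)| - |F(H_1) \cap F(H_2)| = |F(H_1) \setminus F(H_2)|$ gives
$$\deg(H_2) - \deg(H_1) \leq -|F(H_1) \setminus F(H_2)|,$$
and the symmetric inequality $\deg(H_1) - \deg(H_2) \leq -|F(H_2) \setminus F(H_1)|$ comes from $\deg(H_1 H_2) \leq \deg(H_2^2)$. Adding the two forces $|F(H_1) \setminus F(H_2)| = |F(H_2) \setminus F(H_1)| = 0$, so $F(H_1) = F(H_2)$; feeding this back into either inequality yields $\deg(H_1) = \deg(H_2)$, and substituting back into the original formulas shows $\deg(H_1 H_2) = \deg(H_1^2) = \deg(H_2^2)$. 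The conclusion that $H_1$ and $H_2$ have the same set of fully labeled edges is just the statement $F(H_1) = F(H_2)$ already obtained.

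The only delicate point — and the one place the proof really uses the structure of the gluing operation — is the justification of the edge-counting formula: one must verify that edges with at least one unlabeled endpoint never collide under gluing (unlabeled vertices are kept distinct), and that two fully labeled edges collide if and only if they share the same pair of labels. Once those two observations are made carefully, there is no real obstacle; the rest is a three-line algebraic manipulation on nonnegative integers.
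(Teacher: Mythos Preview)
Your proof is correct and is essentially the same argument as the paper's: both set up the counting formulas $\deg(H_1H_2)=d_1+d_2-c$ and $\deg(H_i^2)=2d_i-l_i$, derive the two inequalities from the hypothesis, and add them to force the common fully labeled edges to coincide with all fully labeled edges of each $H_i$. The only cosmetic difference is that you phrase things in terms of the sets $F(H_i)$ and their symmetric difference, whereas the paper works directly with the cardinalities $l_i$ and $c$.
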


\begin{proof} Suppose $H_i$ has degree $d_i$ and $l_i$ fully labeled edges. Then $\deg(H_i^2) = 2d_i - l_i$.
Let $c$ be the number of fully-labeled edges that are common to both $H_1$ and $H_2$. Then $c \leq \min\{l_1, l_2\}$ and $\deg(H_1 H_2) = d_1 + d_2 - c$.

We are given that
$d_1 + d_2 - c \leq \min \{ 2d_1 -l_1, 2d_2 - l_2 \}$, which implies that
$$ 0 \leq l_1 - c \leq d_1 - d_2 \,\,\,\textup{ and } \,\,\, 0 \leq l_2 - c \leq d_2 - d_1.$$
The extremes of the two inequalities give that $d_1 = d_2$, while adding the two inequalities
gives that $l_1 + l_2 - 2c = 0$. Since $c \leq \min \{l_1, l_2\}$,  it follows that $l_1 = l_2 = c$.
\end{proof}

The unlabeling map $[[ \cdot ]] \,:\, \A \rightarrow \A_\emptyset$ removes the labels in a graph combination.
Note that for any partially labeled graph $H$, $\deg(H) = \deg([[H]])$. A sum of squares (sos) in $\A$ is a finite sum of unlabeled squares of graph combinations $a_i \in \A$, namely,
$\sum [[ a_i^2 ]]$. By definition, a sos in $\A$ lies in $\A_\emptyset$.

\begin{example}\label{ex:mult}
The Blakley-Roy inequality for a path of length two, $\uHtwo-\uvedge \uvedge \geq 0$, has a sum of squares proof as follows (\cite{LovaszBook}, pages 28-29).

$$ [[(\vedge{\textcolor{white}{1}}{1}-\uvedge)^2]] =  [[\Htwo{1}{}{}-2\ \vedge{}{} \vedge{\textcolor{white}{1}}{1}+\uvedge\uvedge \ ]]= \uHtwo-\uvedge\ \uvedge$$
\end{example}

We will now investigate the structure of homogeneous
graph combinations that are sos. Their properties and limitations are the key ingredients in the proof of our main results.
We begin with the following lemma whose proof will be postponed to the end of this section.

\begin{lemma} \label{lem:nonzero binomial square and pointed cone}
\begin{enumerate}
\item If $F$ and $H$ are two partially labeled graphs such that $[[(F-H)^2]] = 0$, then $F=H$.
\item Suppose $\sum_{i=1}^k \alpha_i[[(F_i-H_i)^2]]$=0 with $\alpha_i\geq0$ and $F_i \neq H_i$ for each $i$, then  $\alpha_i=0$ for all $i$.
\end{enumerate}
\end{lemma}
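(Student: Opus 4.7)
The plan is to analyze the expansion
\[
[[(F-H)^2]] \;=\; [[F^2]] - 2[[FH]] + [[H^2]]
\]
as a formal combination in $\mathcal{A}_\emptyset$, whose three summands are each single basis elements (unlabeled graphs), and then to promote this coarse algebraic information to the claimed equalities.

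For part (1), I would first observe that the combination on the right involves at most three basis elements of $\mathcal{A}_\emptyset$ with coefficients $1, -2, 1$. A case analysis on which pairs among $[[F^2]], [[FH]], [[H^2]]$ coincide as unlabeled graphs shows that the combination vanishes only when all three are the same unlabeled graph, because in every other configuration some basis element is left with a nonzero net coefficient. So we may assume $[[F^2]] = [[FH]] = [[H^2]]$. Applying Lemma~\ref{lem:degrees are same} with $H_1 = F$ and $H_2 = H$ then gives $\deg(F) = \deg(H)$ and that $F, H$ share the same set of fully labeled edges; comparing the vertex counts
\[
2|V(F)| - |L(F)| \;=\; |V(F)| + |V(H)| - |L(F) \cap L(H)| \;=\; 2|V(H)| - |L(H)|
\]
in the three isomorphic unlabeled graphs forces $|V(F)| = |V(H)|$ and $L(F) = L(H) = L$.

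To conclude $F = H$ as labeled graphs, I would pass to homomorphism densities. Using the standard identity $t([[F^2]]; G) = \mathbb{E}_{\phi: L \to V(G)}[t(F; G, \phi)^2]$ together with its analogues for $[[FH]]$ and $[[H^2]]$, the hypothesis $[[(F-H)^2]] = 0$ translates into
\[
\mathbb{E}_{\phi: L \to V(G)} \bigl[(t(F; G, \phi) - t(H; G, \phi))^2 \bigr] \;=\; 0 \quad \text{for every graph } G,
\]
whence $t(F; G, \phi) = t(H; G, \phi)$ for every $G$ and every $\phi$. A labeled version of Lov\'{a}sz's uniqueness theorem---obtainable by attaching distinct recognizable gadgets at the labeled vertices and reducing to the unlabeled statement---then yields $F \cong H$ as partially labeled graphs.

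Part (2) follows from part (1) by the standard nonnegativity argument. Each $[[(F_i - H_i)^2]]$ is a sum of squares, so $t([[(F_i - H_i)^2]]; G) \geq 0$ for every $G$. Evaluating the identity $\sum_i \alpha_i [[(F_i - H_i)^2]] = 0$ at any $G$ expresses $0$ as a sum of nonnegative terms with nonnegative coefficients, so each $\alpha_i \, t([[(F_i - H_i)^2]]; G)$ must vanish. If some $\alpha_i > 0$, then $t([[(F_i - H_i)^2]]; G) = 0$ for every $G$, and by injectivity of the evaluation map on $\mathcal{A}_\emptyset$ (the classical fact that non-isomorphic unlabeled graphs are linearly independent as density functions) we conclude $[[(F_i - H_i)^2]] = 0$. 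Part (1) then yields $F_i = H_i$, contradicting the hypothesis; hence every $\alpha_i = 0$. The main obstacle is the final step of part (1)---bridging the three-way isomorphism $[[F^2]] = [[FH]] = [[H^2]]$ plus the matching labels, vertex counts, and fully labeled edges to a genuine labeled isomorphism $F \cong H$. The density route above handles this cleanly but pushes the nontrivial content into the labeled Lov\'{a}sz statement; a purely combinatorial alternative would transport the canonical involution of $[[F^2]]$---which fixes the $|L|$ label-vertices pointwise and swaps the two copies of $V(F)\setminus L$---to $[[FH]]$ via the isomorphism, and analyze its fixed-point and orbit structure to extract an isomorphism between the $F$-part and the $H$-part of $[[FH]]$ that respects the shared labeled vertices.
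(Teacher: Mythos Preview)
Your approach is correct and follows essentially the same arc as the paper's proof: expand the square, use linear independence of unlabeled density functions to force $[[F^2]]=[[FH]]=[[H^2]]$, deduce that $F$ and $H$ have the same vertex count and label set $L$, pass to the identity $t([[(F-H)^2]];G)=\mathbb{E}_\varphi\bigl[(t(F;G,\varphi)-t(H;G,\varphi))^2\bigr]$, and conclude that $t(F;G,\varphi)=t(H;G,\varphi)$ for all $G,\varphi$. Part~(2) is handled identically in both.

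The one substantive difference is exactly the step you flag as the main obstacle. You invoke a ``labeled Lov\'asz uniqueness theorem'' and sketch a gadget reduction to the unlabeled case; the paper instead proves this step directly, following the argument of \cite[Proposition~5.44]{LovaszBook}. Concretely, the paper weights the unlabeled vertices of $F$ and $H$ with distinct variables $x_i$, forms the $2\times 2$ matrix $M$ of weighted homomorphism counts from $\{F,H\}$ to $\{\tilde F,\tilde H\}$ (with labeled vertices mapped to themselves), and observes that the multilinear part of $\det M$ is the determinant of the \emph{injective} weighted homomorphism matrix, which is triangular with nonzero diagonal. Hence $\det M\not\equiv 0$, so some positive-integer substitution keeps $M$ nonsingular; blowing up each weighted vertex into that many copies produces an explicit pair $(G,\varphi)$ with $t(F;G,\varphi)\neq t(H;G,\varphi)$. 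This is precisely the labeled uniqueness statement you need, proved rather than cited. Your gadget idea would also work if carried out carefully, but the weighted-homomorphism route is cleaner because it avoids having to argue that the gadgets are ``recognizable'' after passing to densities. Your second proposed alternative (transporting the canonical involution of $[[F^2]]$ to $[[FH]]$) is more speculative: the isomorphism $[[F^2]]\cong[[FH]]$ need not carry the label-fixing involution to one that respects the $F$/$H$ decomposition of $[[FH]]$, so extracting a labeled isomorphism $F\cong H$ from it would require additional argument.
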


\begin{lemma} \label{lem:square in lowest degree}
Let $f = \sum \lambda_s F_s = \sum [[a_i^2]]$ be a sos in $\A$ with $a_i = \sum \alpha_{ij} H_{ij}$. Let $d$ be the minimum degree of any cross product $H_{ij}  H_{ik}$ within any $a_i$. Then
there exists a term $\lambda_s F_s$ in $f$ of degree $d$ such that $F_s$ only
arises via squares $[[H_{ij}^2]]$, and consequently, $\lambda_s > 0$.
\end{lemma}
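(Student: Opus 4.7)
The plan is to locate a single degree-$d$ unlabeled graph $F^*$ whose every occurrence in the expansion of $f$ is as a diagonal square $[[H_{ij}^2]]$; its coefficient in $f$ is then a sum of squares of nonzero reals, so $\lambda_s > 0$.

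First I would expand
\[
f \;=\; \sum_{i,j,k}\alpha_{ij}\alpha_{ik}\,[[H_{ij}H_{ik}]]
\]
and set $\mathcal{M} = \{H_{ij} : \deg(H_{ij}^2) = d\}$. By Lemma~\ref{lem:degrees are same}, whenever $\deg(H_{ij}H_{ik}) = d$ both factors lie in $\mathcal{M}$ and share the same number of edges as well as the same set of fully labeled edges. Among the unlabeled graphs $\mathcal{F}_d = \{[[H^2]] : H \in \mathcal{M}\}$ I would pick $F^* = [[(H^*)^2]]$ of minimum vertex count $|V(F^*)|$, and claim that $F^*$ does not arise as $[[H_aH_b]]$ for any $H_a \neq H_b$ in a common $a_i$.

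Suppose it does; then $H_a, H_b \in \mathcal{M}$. Using $|V(H^2)| = 2|V(H)| - |L_H|$ and $|V(H_aH_b)| = |V(H_a)| + |V(H_b)| - |L_a \cap L_b|$ I compute
\[
|V([[H_a^2]])| + |V([[H_b^2]])| - 2|V(F^*)| \;=\; -|L_a\,\triangle\,L_b|,
\]
and the minimality of $|V(F^*)|$ forces the left side to be nonnegative, giving $L_a = L_b$ (say $=L$), both $[[H_a^2]]$ and $[[H_b^2]]$ attaining the minimum vertex count, and hence $|V(H_a)| = |V(H_b)|$. The case $L_a \neq L_b$ is ruled out at once by this chain of equalities.

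The remaining case, in which distinct $H_a \neq H_b$ share the label set $L$, the fully labeled subgraph, and the vertex count, is the main obstacle. To eliminate it I would combine the sos identity $[[(H_a - H_b)^2]] = [[H_a^2]] - 2F^* + [[H_b^2]]$, which is nonzero by Lemma~\ref{lem:nonzero binomial square and pointed cone}(1) since $H_a \neq H_b$, with the structural characterization of trivial squares in Lemma~\ref{lem:involution}. The free involution on $(H^*)^2$ that swaps its two copies of $H^*$ while fixing the labeled vertices descends to an involutive automorphism $\sigma$ of the abstract graph $F^*$. Tracking $\sigma$ against the partition $V(F^*) = L \sqcup V_u(H_a) \sqcup V_u(H_b)$ coming from the $H_aH_b$ representation forces the unlabeled attachments of $V_u(H_a)$ and $V_u(H_b)$ to $L$ to coincide, i.e., $H_a = H_b$, contradicting the assumption. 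Once this case is dispatched, $F^*$ arises in the expansion of $f$ only through diagonals, so its coefficient equals $\sum \alpha_{ij}^2 > 0$ over the nonempty set of $(i,j)$ with $H_{ij} \in \mathcal{M}$ and $[[H_{ij}^2]] = F^*$, as required.
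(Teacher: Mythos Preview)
Your vertex-count minimization handles the case $L_a\neq L_b$ correctly, but the argument collapses when $L_a=L_b$. The involution step is not a proof: the automorphism $\sigma$ of $F^*$ is defined via the $(H^*)^2$ decomposition and has no a priori relationship to the independent partition $V(F^*)=L\sqcup V_u(H_a)\sqcup V_u(H_b)$. Concretely, take $L=\{1,2\}$ and let
\[
H^*:\ u\text{--}1\text{--}2\text{--}v,\qquad
H_a:\ \text{edges }\{12,\,1a,\,1b\},\qquad
H_b:\ \text{edges }\{12,\,2c,\,2d\}.
\]
All three graphs have four vertices, three edges, label set $\{1,2\}$, and the single fully labeled edge $12$. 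A direct check gives $[[(H^*)^2]]=[[H_aH_b]]=G$, the tree on six vertices with two adjacent degree-$3$ vertices each carrying two pendant leaves; moreover $|V([[H_a^2]])|=|V([[H_b^2]])|=|V(G)|=6$, so your rule may select $F^*=G$, and yet $G$ arises as the genuine cross product $[[H_aH_b]]$ with $H_a\neq H_b$. Under the natural identification, $\sigma$ swaps the two leaves inside $V_u(H_a)$ and inside $V_u(H_b)$ separately; it does not force $H_a=H_b$.

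The structural reason the approach stalls is that vertex count yields only the non-strict relation
\[
|V(H_a^2)|+|V(H_b^2)|-2|V(H_aH_b)|=-|L_a\triangle L_b|\le 0,
\]
with equality exactly when $L_a=L_b$; so once label sets agree, minimizing vertex count cannot separate a square from a mixed product. The paper replaces vertex count by a \emph{generic} linear functional. Using Lemma~\ref{lem:nonzero binomial square and pointed cone} it shows that the cone generated by all degree-$d$ expressions $[[(F-H)^2]]$ with $F\neq H$ is pointed, then picks $L$ in the interior of its dual, arranged to take distinct values on the finitely many degree-$d$ unlabeled graphs. This guarantees the strict inequality $L([[H_a^2]])+L([[H_b^2]])>2L([[H_aH_b]])$ whenever $H_a\neq H_b$, so the $L$-maximal degree-$d$ cross product is forced to be a pure square. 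The strictness, supplied by Lemma~\ref{lem:nonzero binomial square and pointed cone}, is exactly the missing ingredient in your argument.
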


\begin{proof}
Let $\A_\emptyset^d$ denote the vector space spanned by all unlabeled graphs of degree $d$. Let $C$ be the cone in $\A_\emptyset^d$ generated by all unlabeled squares of the form $[[(F-H)^2]]$ where $F$ and $H$ are distinct partially labeled graphs such that $\deg(F^2) = \deg(H^2) = \deg(FH) = d$. Since $d$ is fixed,
there are only finitely many possibilities for the generators $[[(F-H)^2]]$ of $C$ and hence $C$ is polyhedral, and therefore, closed. Furthermore, $C$ is pointed by Lemma~\ref{lem:nonzero binomial square and pointed cone} (2). Since $C$ is closed and pointed, its dual cone $C^\ast$  is full-dimensional in $(\A_\emptyset^d)^\ast$. Therefore, we may pick a sufficiently generic linear functional $L \,:\, \A_\emptyset^d \rightarrow \mathbb{R}$ from the interior of $C^\ast$ that will not only have the property that $L(a) > 0$ for all nonzero $a \in C$, but also takes distinct values on the finitely many unlabeled graphs of degree $d$ in $\A_\emptyset^d$.

Consider all distinct graphs $[[H_{ij}   H_{ik}]]$ of degree $d$ that can be formed by multiplying two constituent graphs in any $a_i$ and then unlabeling, including the unlabeled squares $[[H_{ij}^2]]$.
Let $F$ be the unique largest graph in this list in the total order induced by $L$, and suppose $F = [[H_{ij}   H_{ik}]]$ for some $i$ and $j \neq k$. By Lemma~\ref{lem:degrees are same} we have that
$d = \deg(F) = \deg(H_{ij}^2) = \deg(H_{ik}^2)$. Since $H_{ij} \neq H_{ik}$, by Lemma~\ref{lem:nonzero binomial square and pointed cone} (1), $[[(H_{ij} - H_{ik})^2]]$ is a nonzero generator
of the cone $C$, and hence, $L([[(H_{ij} - H_{ik})^2]]) > 0$. This implies that
$L([[H_{ij}^2]])  + L([[H_{ik}^2]]) > 2 L([[H_{ij}   H_{ik}]])$. Therefore, at least one of $L([[H_{ij}^2]])$ or
$L([[H_{ik}^2]])$ is strictly greater than $L(F)$ which contradicts the choice of $F$. Thus $F$ only arises via squares of the form $[[H_{ij}^2]]$ for some $H_{ij}$. Therefore, it must be a constituent graph of $f$ and setting $F_s=F$ proves the lemma.
\end{proof}

\begin{corollary} \label{cor:lowest degree part is sos}
Let $f = \sum [[a_i^2]]$ be a sos in $\A$ with $a_i = \sum \alpha_{ij} H_{ij}$ and let $d$ be the lowest degree of a term in $f$. Then the degree $d$ component of $f$ is again a sos, $\sum [[c_i^2]]$, where all cross products of terms in each $c_i$ have degree $d$.
\end{corollary}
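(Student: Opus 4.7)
The plan is to combine Lemma~\ref{lem:degrees are same} and Lemma~\ref{lem:square in lowest degree} to isolate the degree $d$ part of each $[[a_i^2]]$ and regroup it into squares whose cross products all have degree $d$.

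First I would show that $d$ coincides with the minimum degree, call it $d^*$, of any cross product $H_{ij}H_{ik}$ occurring in some $a_i$. On the one hand, any constituent of $f$ of degree $d$ is a linear combination of cross products of degree $d$, so $d^* \leq d$. On the other hand, Lemma~\ref{lem:square in lowest degree} produces a nonzero term of $f$ of degree $d^*$, forcing $d^* \geq d$. Next, I would apply Lemma~\ref{lem:degrees are same} to characterize which pairs $(j,k)$ give $\deg([[H_{ij}H_{ik}]]) = d$: since $d$ is the minimum cross-product degree, the hypothesis of that lemma holds and yields $\deg(H_{ij}^2) = \deg(H_{ik}^2) = d$ together with the equality of the sets of fully labeled edges of $H_{ij}$ and $H_{ik}$. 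Conversely, a short computation using $\deg(H_{ij}H_{ik}) = \deg(H_{ij}) + \deg(H_{ik}) - c$, with $c$ the number of fully labeled edges common to $H_{ij}$ and $H_{ik}$, shows that these two conditions together force $\deg(H_{ij}H_{ik}) = d$.

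With this characterization in hand, I would, for each $a_i$, partition the set $J_i := \{j : \deg(H_{ij}^2) = d\}$ into equivalence classes under $j \sim k$ iff $H_{ij}$ and $H_{ik}$ have the same set of fully labeled edges, and set $c_{i,[j]} := \sum_{k \in [j]} \alpha_{ik} H_{ik}$ for each class $[j]$. By the previous paragraph, every cross product appearing in $[[c_{i,[j]}^2]]$ has degree exactly $d$, while the only cross products in $[[a_i^2]]$ of degree $d$ are those between two indices in the same equivalence class. Summing $\sum_{i,[j]} [[c_{i,[j]}^2]]$ thus recovers the degree $d$ component of $f$ as a sum of squares of the required form. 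The main point to check is that the equivalence relation is neither too coarse (which would create a cross product of degree exceeding $d$ in some $[[c_{i,[j]}^2]]$) nor too fine (which would miss some degree $d$ cross product of $[[a_i^2]]$); Lemma~\ref{lem:degrees are same}'s identification of the fully labeled edge set as the discriminating invariant is what makes the partition work cleanly.
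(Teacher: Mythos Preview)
Your proposal is correct and follows essentially the same approach as the paper: both first reduce to the constituent graphs $H_{ij}$ with $\deg(H_{ij}^2)=d$, then partition them into classes whose pairwise products have degree exactly $d$, and take the square of each class. The only cosmetic difference is that you phrase the equivalence relation via ``same set of fully labeled edges'' (manifestly an equivalence relation) whereas the paper defines it directly as $\deg(GH)=d$ and checks transitivity; Lemma~\ref{lem:degrees are same} shows these are the same relation.
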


\begin{proof}
By Lemma~\ref{lem:square in lowest degree}, we know that $d$ is the lowest degree of any cross product $[[H_{ij}   H_{ik}]]$ of graphs $H_{ij}, H_{ik}$ in an $a_i$. Let $f_d$ be the degree $d$ component of $f$. For each $i$, let $b_i$ denote the graph combination obtained from $a_i$ by deleting all terms $\alpha_{ij} H_{ij}$ for which $\deg[[H_{ij}^2]] > d$. By Lemma~\ref{lem:degrees are same}, a deleted term $\alpha_{ij} H_{ij}$ from $a_i$ could not have  cross multiplied with another term $\alpha_{ik} H_{ik}$ in $a_i$ to produce a term of degree $d$. Therefore, the
degree $d$ component of $\sum [[b_i^2]]$ is precisely $f_d$.

Suppose $G,H,K$ are three partially labeled graphs in some $b_i$ with
$\deg(GH) = \deg(HK) = d$. Then by Lemma~\ref{lem:degrees are same},
$\deg(G^2) = \deg(H^2) = \deg(K^2) = d$ and $G,H,K$ all have the same set of fully labeled edges. Let $c$ be the number of fully labeled edges in $G,H,K$.
Then $d = \deg(G^2) = 2 \deg(G) - c$ which implies that $c = 2 \deg(G) - d$. Similarly,
$c = 2 \deg(K) - d$ and hence $\deg(G) = \deg(K)$.
Therefore, $\deg(GK) = \deg(G) + \deg(K) - c = 2\deg(G) - 2 \deg(G) + d = d$.
To conclude, if $\deg(GH) = \deg(HK) = d$, then also $\deg(GK) = d$. This means that we may define an equivalence relation by saying $G \sim H$ if $\deg(GH) = d$.

Now group the terms in each $b_i$ so that all constituent graphs in a group are equivalent in the above sense.  Suppose the graph combinations
corresponding to each group are $b_{i1}, b_{i2}, \ldots, b_{it_i}$. By construction, all cross products of terms in any $b_{ij}$ have degree $d$. Consider the new sos expression
$g := \sum_i \sum_{j=1}^{t_i} [[b_{ij}^2]]$. By construction, $\deg(g) = d$. For each $i$, all terms in $\sum_{j=1}^{t_i} [[b_{ij}^2]]$ occur among the terms of $[[b_i^2]]$. By our regrouping of terms in a $b_i$, a
term in the expansion of $[[b_i^2]]$ is absent from $\sum_{j=1}^{t_i} [[b_{ij}^2]]$ if and only if its degree is larger than $d$. Therefore, $g = f_d$, and we have obtained an sos expression $\sum [[c_i^2]] := \sum_i \sum_{j=1}^{t_i} [[b_{ij}^2]]$ for $f_d$ of the desired form.
\end{proof}

We illustrate the previous corollary with the following example.

\begin{example}\label{ex:lemma}
$$f=2\uthreestar + \uvedge\uvedge\uvedge - 2\upthree + 2 \uthreestar\uvedge - 2\upthree\uvedge= [[(\Htwo{2}{1}{}-\Htwo{1}{2}{}-\vedge{1}{3}\uvedge)^2]]$$

Here, $d=3$ and $f_3=2\uthreestar + \uvedge\uvedge\uvedge - 2\upthree$. Following the procedure from Corollary~\ref{cor:lowest degree part is sos}, we obtain two equivalence classes $\{\Htwo{2}{1}{}, \Htwo{1}{2}{}\}$ and $\{\vedge{1}{3}\uvedge\}$. Therefore, we see that $$f_3=[[(\Htwo{2}{1}{}-\Htwo{1}{2}{})^2]]+[[(\vedge{1}{3}\uvedge)^2]].$$
\end{example}

The above results prove an important structural property of homogeneous graph combinations that are sos which we record in the following theorem. This property will play a crucial role in this paper.

\begin{theorem}\label{thm:homogeneous sos}
Every homogeneous graph combination of degree $d$ that is a sos has a sos expression of the form  $\sum [[a_i^2]]$ where all cross products of terms in any $a_i$ have degree $d$.
\end{theorem}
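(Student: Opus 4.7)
The plan is to observe that Theorem \ref{thm:homogeneous sos} is essentially the homogeneous case of Corollary \ref{cor:lowest degree part is sos}, and to deduce it as an immediate consequence. No genuinely new argument is required.

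First, I would take an arbitrary sos expression $f = \sum_i [[a_i^2]]$ of the given homogeneous degree-$d$ graph combination $f$, where each $a_i = \sum_j \alpha_{ij} H_{ij}$. Since $f$ is homogeneous of degree $d$, every term of $f$ has degree exactly $d$, and so the lowest degree of a term appearing in $f$ is $d$. Consequently, the degree-$d$ component $f_d$ defined in the statement of Corollary \ref{cor:lowest degree part is sos} coincides with $f$ itself.

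Next, I would invoke Corollary \ref{cor:lowest degree part is sos} directly, which produces a sos representation $f_d = \sum_i [[c_i^2]]$ in which all cross products of terms within each $c_i$ have degree $d$. Since $f = f_d$, this is already the desired representation of $f$, and the theorem follows.

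The real technical content has in fact already been carried out in the proofs of Lemma \ref{lem:square in lowest degree} and Corollary \ref{cor:lowest degree part is sos}: namely, regrouping the terms of each $a_i$ via a cross-product equivalence relation and using the pointedness of the cone of binomial squares (from Lemma \ref{lem:nonzero binomial square and pointed cone}) to isolate a distinguished top-degree generator. The only role of Theorem \ref{thm:homogeneous sos} is to record the particularly clean form this conclusion takes in the homogeneous setting, where the distinction between the lowest-degree component and the entire graph combination disappears. I do not anticipate any obstacle beyond verifying that, once $f$ is homogeneous, the phrase \emph{degree-$d$ component of $f$} in Corollary \ref{cor:lowest degree part is sos} means $f$ itself.
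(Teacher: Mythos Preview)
Your proposal is correct and matches the paper's approach exactly: the paper does not give a separate proof of Theorem~\ref{thm:homogeneous sos} but simply states that ``the above results prove'' it, meaning it is recorded as an immediate consequence of Corollary~\ref{cor:lowest degree part is sos} in the homogeneous case, precisely as you describe.
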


Recall the definition of a trivial square of Definition~\ref{def:trivial square}. We now prove a characterization of trivial squares using automorphisms of the graph.

\begin{lemma} \label{lem:involution} An unlabeled connected graph $G$ is a non-trivial square, i.e., $G = [[F^2]]$ for some partially labeled graph $F$ that is not a fully labeled copy of $G$, if and only if there is an automorphism $\varphi$ of $G$ such that
\begin{enumerate}
\item $\varphi$ is an involution,
\item $\varphi$ fixes a non-empty proper subset of the vertices of $G$, and
\item the vertices $v$ that are not fixed by $\varphi$ can be partitioned into two groups,
each group consisting of one vertex from the pair $\{v, \varphi(v)\}$, such that there are no edges
between vertices in the two groups.
\end{enumerate}
\end{lemma}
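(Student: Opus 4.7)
The plan is to prove both directions by making the natural correspondence between partially labeled graphs $F$ and the symmetry $\varphi$ explicit: $\varphi$ swaps the two copies of $F$ in the gluing product $F \cdot F$ while fixing the labeled vertices.

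For the forward direction, suppose $G = [[F^2]]$ for some partially labeled graph $F$ that is not a fully labeled copy of $G$. Let $L$ be the set of labeled vertices of $F$ and $U$ the unlabeled ones. By the definition of the gluing product, $V(F^2)$ naturally decomposes as $L \sqcup U_1 \sqcup U_2$, where $U_i$ is the copy of $U$ coming from the $i$-th factor, and the edges of $F^2$ are precisely the edges of $F$ placed inside $L \cup U_1$ together with the parallel edges inside $L \cup U_2$. Define $\varphi$ on $V(G)=V(F^2)$ by fixing $L$ pointwise and swapping corresponding vertices of $U_1$ and $U_2$. This is manifestly an involutive automorphism, and taking $A = U_1$, $B = U_2$ yields the desired partition since no edge of $F^2$ goes from $U_1$ to $U_2$. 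It remains to check that the fixed set $L$ is non-empty and proper: it is proper because $F$ is not fully labeled, so $U \neq \emptyset$; it is non-empty because if $L = \emptyset$, then $F^2$ would split as two disjoint copies of $F$, contradicting connectedness of $G$.

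For the backward direction, assume $\varphi$ satisfies (1)--(3), let $S$ denote the set of fixed vertices, and let $V(G) \setminus S = A \sqcup B$ be the given partition with $B = \varphi(A)$ and no edges of $G$ between $A$ and $B$. Define a partially labeled graph $F$ as the induced subgraph $G[S \cup A]$, with the vertices of $S$ assigned distinct labels in $\mathbb{N}$ and the vertices of $A$ left unlabeled. Since $S$ is a proper subset, $A \neq \emptyset$, so $F$ is not a fully labeled copy of $G$. The first thing to verify is that $F$ has no isolated vertices: a vertex $a \in A$ has neighbors in $G$, and by (3) none lie in $B$, so all lie in $S \cup A$; a vertex $s \in S$ with a neighbor $b \in B$ also has the neighbor $\varphi(b) \in A$ since $\varphi$ is an automorphism fixing $s$. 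Hence every vertex in $S \cup A$ has a neighbor in $S \cup A$, and $F$ is a valid element of $\mathcal{A}$.

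Finally, one checks that $[[F^2]] = G$ by exhibiting the natural isomorphism $\psi \colon V(F^2) \to V(G)$ that is the identity on $S$, the identity on the first copy $A_1$ of $A$, and acts as $\varphi$ on the second copy $A_2$. Since $\varphi$ fixes $S$, is an automorphism, and maps $A$ bijectively to $B$, the edges of $F$ living in the first copy correspond to the edges of $G$ inside $S \cup A$, the edges in the second copy correspond under $\varphi$ to the edges inside $S \cup B$, and condition (3) guarantees that no edges of $G$ are missed, since every edge of $G$ lies in $S \cup A$ or $S \cup B$. The main point to watch is not any single computation but the bookkeeping of the three edge types of $G$ (within $S$, between $S$ and $A \cup B$, and within $A \cup B$) against the edges produced by the gluing; condition (3) is exactly what is needed to rule out edges the construction cannot reproduce, and the fact that $\varphi$ is an automorphism is exactly what is needed to reproduce the edges meeting $B$.
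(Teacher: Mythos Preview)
Your proof is correct and follows essentially the same approach as the paper's. The only cosmetic difference is in the backward direction: the paper constructs $F$ as the quotient of $G$ obtained by identifying each $v$ with $\varphi(v)$, whereas you take the induced subgraph $G[S\cup A]$; since condition (3) forbids $A$--$B$ edges, these two descriptions give isomorphic graphs. Your explicit verification that $F$ has no isolated vertices (required for $F\in\mathcal{A}$) is a detail the paper leaves implicit.
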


\begin{proof} Suppose $G = [[F^2]]$ is a non-trivial square. Consider the map $\varphi \,:\, V(G) \rightarrow V(G)$ that fixes all vertices of $G$ that were labeled in $F^2$, and sends an unlabeled vertex $v$ of $F^2$ to $v'$ where $v$ and $v'$ are copies of the same unlabeled vertex in $F$. Then $\varphi$ is an automorphism of $G$ that is also an involution.
Since $G$ is connected, $F$ is not unlabeled, and it is not fully labeled by assumption.
Therefore, $\varphi$ fixes a non-empty proper subset of the vertices of $G$.
The first group of vertices in (3) is made up of the unlabeled vertices $v$ in $F$ and the second
group is made up of the duplicates $v'$ of unlabeled vertices in $F$ that exist in $F^2$.

Conversely, suppose $G$ has an automorphism $\varphi$ with properties (1)-(3).
Then consider the graph
$F$ obtained by identifying vertices $v$ and $\varphi(v)$ and deleting a second copy of any edge that gets doubled in this process. Observe that Property (3) ensures that no edge is more than doubled, and no loops are
created, and therefore $F$ is a simple graph. Label all vertices of $F$ that identified with themselves with distinct labels to create a partially labeled graph $\tilde{F}$.
It follows by construction that $G = [[\tilde{F}^2]]$. Since $\varphi$ fixes a non-empty proper set of vertices of $G$, only a proper set of vertices of $\tilde{F}$ are labeled, hence
$G$ is a non-trivial square.
\end{proof}

\begin{example}[Paths] \label{ex:paths}
Let $P_k$ be an unlabeled path with $k$ edges and $k+1$ vertices $v_1,\dots,v_{k+1}$.
Using Lemma~\ref{lem:involution} one can argue that a path $P_k$ of odd length $k$ is a trivial square.
Every graph automorphism $\varphi$ of $P_k$ has to send $v_1$ to either itself or to $v_{k+1}$. Each choice
completely determines $\varphi$ since adjacent vertices have to be sent to adjacent vertices. If $v_1$ is sent to $v_1$ then
$\varphi$ is the trivial involution that fixes all vertices in $P_k$. If $v_1$ is sent to $v_{k+1}$ then $v_2$ is sent to $v_{k}$, etc until $v_{k+1}$ is sent to $v_1$.
This involution doesn't fix any vertices in $P_k$. Either way, we see from Lemma~\ref{lem:involution} that
$P_k$ is a trivial square.

On the other hand, if $k$ is even, then $P=[[F^2]]$ where $F$ is a path of length $\frac{k}{2}$ with the first vertex labeled $1$.
\end{example}

The notion of trivial squares together with Theorem~\ref{thm:homogeneous sos} will provide us with a tool to recognize homogeneous graph combinations that are not sos.

We can now prove Theorem~\ref{thm:main1} from the introduction.\\

\begin{proof}[Proof of Theorem~\ref{thm:main1}]
Let $f_{d_{min}}$ be the (homogeneous) lowest degree component of $f$. By
Corollary~\ref{cor:lowest degree part is sos}, if $f$ is a sos, then $f_{d_{min}}$ is a sos. By Theorem~\ref{thm:homogeneous sos},
 $f_{d_{min}} = \sum [[a_i^2]]$ where all unlabeled cross products of constituent graphs in each $a_i$ have degree $d_{min}$. We also know from Lemma~\ref{lem:square in lowest degree} that one of the $F_s$ in $f_{d_{min}}$ only arises as $[[H_{ij}^2]]$ for some $H_{ij}$ in some $a_i$ and then $\lambda_s > 0$. By assumption, whenever $\lambda_s > 0$, $F_s$ is a trivial square, which means that $F_s = [[H_{ij}]]$ for some $i$ and $j$ and $H_{ij}$ is fully labeled.

Pick a trivial square $F_s$ in $f_{d_{min}}$ and suppose $F_s = [[H_{ij}]]$.
For the same $i$ and $j$, consider the cross product $[[H_{ij}   H_{ik}]]$ for some $j \neq k$. Since $H_{ij} \neq H_{ik}$ as partially labeled graphs, their product has degree larger than $d$ which is a contradiction. So it must be that $H_{ij}$ is the only constituent graph of $a_i$ and $a_i = \alpha_{ij}H_{ij}$. Therefore, we may remove all occurrences of fully labeled graphs that square and unlabel to $F_s$ from the sos decomposition $\sum [[a_i^2]]$ to get an sos expression for
$f' = f_{d_{min}} - \lambda_s F_s$. Repeating this procedure, we may remove all trivial squares from $f_{d_{min}}$ to get a graph combination $\bar{f}$ with only negative coefficients that is still an sos. This is a contradiction since
we showed in Lemma~\ref{lem:square in lowest degree} that a sos always has a term with a positive coefficient.
\end{proof}

\bigskip

We will now apply Theorem~\ref{thm:main1} to prove our main results.
The first application is to show that the Blakley-Roy inequality $P_k - e^k \geq 0$, for $k \geq 3$ and odd, cannot be certified by sums of squares in the gluing algebra.
It follows that this result provides a
negative answer to Problem 17 (b)  in \cite{LovaszOpenProblems} which asked whether the Blakley-Roy inequality has a sos certificate (see Section~\ref{sec:translation} for details).

\begin{proof}[Proof of Corollary~\ref{cor:BR}]
If $\lambda \leq 0$ then $\lambda P_k - e^k$ is not sos by Lemma~\ref{lem:square in lowest degree} which says that every homogenous sos has a term with a positive coefficient.
If $\lambda > 0$, the result follows from Theorem~\ref{thm:main1} and Example~\ref{ex:paths} which showed that $P_k$ is a trivial square.

\end{proof}

Problem 21 in \cite{LovaszOpenProblems} asks the general question as to whether it is always possible to
certify the nonnegativity of a graph combination $f$ by multiplying it with $(1+g)$ where
$g$ is sos and having the product be a sos?  It was shown in \cite{HN11} that the answer is no.
We provide the first explicit example of this by showing that for $f = \lambda P_k - e^k$ there are no sos $g \in \A_\emptyset$ such that
$f(1+g)$ is a sos. Using results of Section~\ref{sec:translation}, it will follow that this
answers Lov\'asz's question negatively.

\begin{proof}[Proof of Corollary~\ref{cor:multipliers}]
Let $f=\lambda P_k-e^k$ for any $\lambda\in \mathbb{R}$ where $k$ is odd which we just showed is not a sos. Suppose there was a sos $g \in \A_\emptyset$ such that $f(1+g)$ is sos.
Then the lowest degree part of $f(1+g)$ is precisely $f$ which is not a sos.
This contradicts Corollary~\ref{cor:lowest degree part is sos}
which says that the lowest degree part of a sos is again sos.

\end{proof}

Sidorenko's conjecture is that $H-e^{|E(H)|} \geq 0$ when $H$ is a bipartite graph.
Note that $P_k - e^k \geq 0$ is an instance of this and has several proofs as mentioned in the introduction.
The smallest open case of Sidorenko's conjecture is to establish the inequality for $H = K_{5,5} \setminus C_{10}$ where $K_{5,5}$ is the complete bipartite graph with two color classes of size five and $C_{10}$ is a Hamiltonian cycle through the $10$ vertices of $K_{5,5}$. Our tools show that it is not possible to use sos to establish the nonnegativity of $H - e^{|E(H)|}$ when
$H = K_{5,5} \setminus C_{10}$.

\begin{center}
\sidconj \\
\medskip
$H = K_{5,5} \setminus C_{10}$ labeled as in the proof of Corollary~\ref{cor:sidorenko}
\end{center}

\bigskip
\begin{proof}[Proof of Corollary~\ref{cor:sidorenko}]
We use Lemma~\ref{lem:involution} to show that $H = K_{5,5} \setminus C_{10}$ is a trivial square. The conclusion follows from Theorem~\ref{thm:main1} with the same argument as in Corollary \ref{cor:multipliers}.

We first argue that the automorphism group of $H$ is the dihedral group $D_{10}$ which is the automorphism group of the $10$-cycle $C_{10}$. Observe that there is a unique (up to swapping colors) two-coloring of $H$. Any involution of $H$ either permutes the vertices within each color class, or swaps the two color classes. The complement $\bar{H}$ of $H$ is a $10$-cycle with two complete graphs $K_5$ on the even and odd vertices respectively. Any automorphism $\varphi$ of $H$ is also an automorphism of $\bar{H}$. The above argument shows that $\varphi$ sends edges of the union of the two $K_5$'s to themselves. Therefore $\varphi$ sends the edges of the $10$-cycle to itself, and thus the automorphism group of $H$ is a subgroup of $D_{10}$. However, it is easy to see that automorphisms of the $10$-cycle are also automorphisms of $H$, and the automorphism group of $H$ is $D_{10}$.

There are $11$ involutions in $D_{10}$, ten of which are reflections and one is rotation by $180$ degrees. We want to argue that each of these involutions violate at least one of the properties (1)-(3) of
Lemma~\ref{lem:involution}. Five of the reflections and the rotation do not fix any vertices which violates property (2). The remaining five reflections about the diagonals are involutions that fix two vertices of $H$. We will argue that these reflections violate property (3). It suffices to argue this for one of them. Consider the reflection of $H$ about its horizontal diagonal. This involution fixes vertices $a$ and $f$, but sends $b \mapsto j$, $c \mapsto i$,
$d \mapsto h$ and $e \mapsto g$. Now we check whether the vertices that are not fixed by the involution can be partitioned as in (3). We see that vertices $b,c,d,e$ have to be in the same group since
there are edges connecting one to the next. But then $g,h,i,j$ also belong to this group because of the diagonals.
Thus it is not possible to divide the vertices that are not fixed by the involution into two groups as in (3), and $H$ is a trivial square.
\end{proof}

\bigskip

We note that the proof of Lemma~\ref{lem:square in lowest degree} says something special about the
sos decomposition of homogeneous graph combinations of the form $F_1 - F_2$. Recall the cone $C$ from the proof of the lemma that was generated by
unlabeled squares of the form $[[(F-H)^2]]$ where $\deg(F^2) = \deg(H^2) = \deg(FH) = d$. Since $C$ is polyhedral, it has a finite inequality description which allows one to test for membership in $C$.

\begin{proposition} A graph combination $f = F_1 - F_2$ where $F_1, F_2$ are two unlabeled graphs of the same degree is a sos
if and only if it has a sos decomposition of the form $\sum \lambda_{ij} [[(H_i - H_j)^2]]$ with $\lambda_{ij} \geq 0$.
\end{proposition}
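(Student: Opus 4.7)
The plan splits into two directions. The backward direction is immediate: each summand $\lambda_{ij}[[(H_i - H_j)^2]]$ is itself a sum of squares, and sums of squares are closed under nonnegative combinations.

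For the forward direction, the goal is to show that $f = F_1 - F_2$ lies in the polyhedral cone $C$ from the proof of Lemma~\ref{lem:square in lowest degree}, generated by all $[[(F-H)^2]]$ with $\deg(F^2) = \deg(H^2) = \deg(FH) = d$. Because $C$ is finitely generated by these binomial squares, membership in $C$ is exactly a nonnegative decomposition of the desired form, so this will conclude the argument.

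The argument will re-run the proof of Lemma~\ref{lem:square in lowest degree}, tracking which graph plays the role of the ``largest''. First, apply Theorem~\ref{thm:homogeneous sos} to fix an expression $f = \sum_i [[a_i^2]]$ with $a_i = \sum_j \alpha_{ij} H_{ij}$ and all term-wise cross products of degree $d$. Let $\mathcal{F}$ denote the finite set of unlabeled graphs appearing as some $[[H_{ij} H_{ik}]]$ in the expansion; since the coefficients of $F_1$ and $F_2$ in $f$ are $\pm 1$, both lie in $\mathcal{F}$. Since $C$ is closed, pointed and polyhedral, $C^*$ is full-dimensional, and those $L \in \textup{int}(C^*)$ that take distinct values on the finitely many graphs of $\mathcal{F}$ form an open dense subset of $C^*$. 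For any such $L$, the argument of Lemma~\ref{lem:square in lowest degree} forces the $L$-largest graph $F^* \in \mathcal{F}$ to arise only from diagonal squares $[[H_{ij}^2]]$, making its coefficient in $f$ equal to $\sum_{(i,j):\, [[H_{ij}^2]] = F^*} \alpha_{ij}^2 > 0$; but the only graph with positive coefficient in $f = F_1 - F_2$ is $F_1$, so $F^* = F_1$, and hence $L(F_1) > L(F_2)$, i.e., $L(f) > 0$.

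Density together with continuity of the evaluation $L \mapsto L(f)$ then upgrades this to $L(f) \geq 0$ for every $L \in C^*$, whence $f \in C^{**} = C$ (the last equality by closedness of the polyhedral cone $C$). The step requiring the most care will be the binomial-specific application of Lemma~\ref{lem:square in lowest degree}: one must combine the membership $F_2 \in \mathcal{F}$ (without which the strict inequality $L(F_1) > L(F_2)$ would be vacuous) with the fact that no graph other than $F_1$ can absorb the strictly positive coefficient forced on $F^*$. Once this is in place, the passage from the strict inequality on a dense subset of $C^*$ to the non-strict inequality on all of $C^*$ and the bipolar identity $C^{**} = C$ are routine.
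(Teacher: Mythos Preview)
Your proof is correct and takes essentially the same approach as the paper's. The paper argues by contradiction---assuming $f \notin C$, separating with a functional $L$ satisfying $L(f) < 0$ and $L > 0$ on $C \setminus \{0\}$, and then invoking the mechanism of Lemma~\ref{lem:square in lowest degree} to force the positive-coefficient term to be $F_1$, contradicting $L(F_1) < L(F_2)$---while you run the equivalent direct bipolar argument, showing $L(f) \geq 0$ for all $L \in C^*$ and concluding $f \in C^{**} = C$; your version is slightly more careful about genericity of $L$ and the density step, but the underlying idea is identical.
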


\begin{proof}
Suppose $f$ is a sos but $f \not \in C$. Since $C$ is pointed,
there is a linear functional $L$ such that
$L(f) < 0$ and $L(c) > 0$ for all nonzero $c \in C$. Since $L(F_1) < L(F_2)$, the proof of Lemma~\ref{lem:square in lowest degree} says that $F_2$ is a square and its coefficient in $f$ is
positive which is a contradiction.  Therefore, $f$ lies in $C$ which means that
$f = \sum \lambda_{ij} [[(H_i-H_j)^2]]$ for some $\lambda_{ij} \geq 0$.
\end{proof}

The rest of this section is devoted to the proof of Lemma~\ref{lem:nonzero binomial square and pointed cone}. This needs the notion of nonnegativity of a graph combination which we saw briefly in the introduction.
For this, we will need to view elements $a \in \A$ as functions that can be evaluated on unlabeled graphs $G$, including those with isolated vertices. We closely follow the exposition in \cite{HN11}.

Recall that a graph homomorphism between two unlabeled graphs $H$ and $G$ is an adjacency preserving map $h \,:\, V(H) \rightarrow V(G)$ such that
$h(i)h(j) \in E(G)$ if $ij \in E(H)$. The homomorphism density of $H$ in $G$, denoted as $t(H;G)$ is the
probability that a random map from $V(H) \rightarrow V(G)$ is a homomorphism. Define
$t(1;G) := 1$ for all $G$. Now suppose $H$ is a partially labeled graph and $L_H$ is its set of labels.
Given a map $\varphi: L_H \rightarrow V(G)$, define the homomorphism density $t(H;G,\varphi)$ as the
probability that a random map from $V(H)$ to $V(G)$ is a homomorphism conditioned on the
event that the labeled vertices in $H$ are mapped to $V(G)$ according to $\varphi$. Then, by the rules of conditional probability, $t([[H]];G)$ is the (positively) weighted average of the conditional probabilities $t(H;G,\varphi)$ over all maps $\varphi$.
For a combination of partially labeled graphs $a = \sum_{i=1}^t \alpha_iH_i$, let $L_a = \cup_{i=1}^t L_{H_i}$ be the union of all label sets of all constituent graphs of $a$. Then for a
fixed map $\varphi \,:\, L_a \rightarrow V(G)$,  define $t(a;G,\varphi) := \sum \alpha_i t(H_i;G,\varphi|_{L_{H_i}})$.

Suppose we now fix a label set $L$ and a map $\varphi \,:\, L \rightarrow V(G)$. Then
if $H_1$ and $H_2$ are two partially labeled graphs whose label sets $L_{H_1}$ and $L_{H_2}$ are contained in $L$,
$t(H_1H_2;G, \varphi) = t(H_1;G,\varphi|_{L_{H_1}}) t(H_2;G,\varphi|_{L_{H_2}})$. Recall that $\A_L$
was the subalgebra of $\A$ consisting of all partially labeled graphs whose label sets are contained in $L$. Then we have that
$t(-;G,\varphi)$ is a homomorphism from $\A_L$ to $\mathbb{R}$.

We say that $a \in \A$ is {\em nonnegative} if $t(a;G,\varphi) \geq 0$ for all
unlabeled graphs $G$ and maps $\varphi \,:\, L_a \rightarrow V(G)$.
Note that any partially labeled graph $H$ is nonnegative
since $t(H;G,\varphi)$ is a probability. By the same reason, $H^2$ and $[[H^2]]$ are also nonnegative, but graph combinations $a \in \A$ are not necessarily nonnegative since they have
arbitrary coefficients. However, if $a \in \A$, then $a^2$ is nonnegative since
$t$ is a homomorphism. In particular, any sos $\sum [[a_i^2]] \in \A$ is nonnegative.

To prove Lemma~\ref{lem:nonzero binomial square and pointed cone}, we will need the notion of {\em weighted graph homomorphisms} as in \cite[\S 5.2]{LovaszBook}. A {\em node-weighted graph} $G$ is one with node weights $\omega_u(G)$ on the nodes $u \in V(G)$. To a map $h \,:\, V(H) \rightarrow V(G)$, define
$\omega_h(H,G) := \prod_{u \in V(H)} \omega_{h(u)}(G)$. The number of weighted homomorphisms  (resp. weighted injective homomorphisms) from
$H$ to $G$ is then $\sum \omega_h(H,G)$ where the sum varies over all homomorphisms $h \,: \,nonzero V(H) \rightarrow V(G)$ (resp. all injective homomorphisms $h \,:\, V(H) \rightarrow V(G)$).

\bigskip
\begin{proof}[Proof of Lemma~\ref{lem:nonzero binomial square and pointed cone}]
\begin{enumerate}

\item We need to show that if $[[(F-H)^2]]=0$ then $F=H$. Suppose $0 = [[(F-H)^2]]=[[F^2]]+[[H^2]]-2[[FH]]$.
Then $[[F^2]] = [[H^2]] = [[FH]]$ since density functions of unlabeled graphs are linearly independent \cite[Corollary~5.45]{LovaszBook}.
This implies that $F$ and $H$ must have the same number of vertices, and the same set of labels $L$. Indeed, you see this by counting the number of vertices in each of the three graphs $[[F^2]], [[H^2]]$ and $[[FH]]$, differentiated by how many are labeled in $F$ and $H$, and how many labels are shared between $F$ and $H$.
If for a graph $G$ and a map $\varphi: L\rightarrow V(G)$, $t(F;G, \varphi) \neq t(H;G, \varphi)$,
then $t((F-H)^2;G,\varphi)  >0$ which implies that $t([[(F-H)^2]],G) > 0$, contradicting that $[[(F-H)^2]]=0$. Our proof strategy will be to show that if $F \neq H$ then there is a graph $G$ and a map $\varphi \,:\, L \rightarrow V(G)$ such that $t(F;G, \varphi) \neq t(H;G, \varphi)$.

In order to construct the graph $G$, we follow the proof of \cite[Proposition 5.44]{LovaszBook}. Weigh the unlabeled vertices in both $F$ and $H$ with distinct variables $x_i$ and call these weighted graphs $\tilde{F}$ and $\tilde{H}$.
Then consider the $2\times 2$ matrix $M$ with rows indexed by $F$ and $H$, with columns indexed by
$\tilde{F}$ and $\tilde{H}$ and entries equal to the number of weighted homomorphisms from $F,H$ to
$\tilde{F}, \tilde{H}$, where the (commonly) labeled vertices of $F$ (resp.\ $H$) and $\tilde{F}$ (resp.\ $\tilde{H}$) map to each other.
The matrix $M$ is filled with polynomials and hence, $\det(M)$ is a polynomial. We observe that $\textup{rank}(M) = 2$, i.e., $\det(M)$ is not identically zero. Indeed, since the variables $x_i$ are all distinct,
the multilinear component of $\det(M)$ is $\det(M')$, where $M'$ is the matrix with entries equal to the number of
weighted injective homomorphisms from $F$ (resp.\ $H$) to $\tilde{F}$ (resp.\ $\tilde{H}$),
which is nonzero, since $M'$ is upper/lower triangular, and its diagonal entries are nonzero polynomials.
Hence, $\det(M)$ is not the zero polynomial, which shows that for an algebraically independent
substitution of the $x_i$'s, $\det(M)$ will not vanish.

This means that there is some choice of positive integer values for the $x_i$'s that will keep the matrix
$M$ non-singular. Substitute each $x_i$ with such a positive integer to get a new matrix $M$ and replace a vertex $v$ of weight $m$ in $\tilde{F}$ or $\tilde{H}$  with $m$ copies of itself (each with the same neighborhood that $v$ had) to obtain graphs $G_1$ and $G_2$.
The matrix with entries equal to the number of homomorphisms from $H$ and $F$, to $G_1$ and $G_2$, such that
the commonly labeled vertices are mapped to each other is still $M$. Here $\varphi$ is again the map that sends $L$ to $V(G_1)$ and $V(G_2)$. Converting these entries to homomorphism
densities involves dividing each column in the matrix by a constant which keeps the resulting matrix
again non-singular.  Therefore, its rows are not scalar multiples of each other, and either $t(F;G_1,\varphi) \neq t(H;G_1,\varphi)$, or $t(F;G_2,\varphi) \neq t(H;G_2,\varphi)$.

\item For $\lambda_i \geq 0$,
$\sum \lambda_i [[(F_i - H_i)^2]] = 0$ if and only if for each $i$, either $\lambda_i = 0$ or
$[[(F_i-H_i)^2]] = 0$. We have that $[[(F-H)^2]] =0$ if and only if $F=H$.  Since $F_i \neq H_i$ for each $i$, it must be that $\lambda_i = 0$ for all $i$.
\end{enumerate}
\end{proof}
\bigskip


\section{Translation of our gluing algebra to other settings} \label{sec:translation}

The goal of this section is to show that the existence of a sum of squares certificate is equivalent in Razborov's flag algebra and Lov\'asz-Szegedy's gluing algebra and Hatami-Norine's gluing algebra, and the gluing algebra we presented in Section~\ref{sec:gluing}. Therefore Corollaries \ref{cor:BR}, \ref{cor:multipliers} and \ref{cor:sidorenko} hold in these settings as well.

\subsection*{Relation to Lov\'{a}sz-Szegedy.} A family of gluing algebras very similar to ours was introduced in the work of Lov\'{a}sz and Szegedy \cite{MR2921000}. The only difference is that they allowed combinations of graphs with isolated vertices, and the graphs in each algebra were required to have the same labels. However, the resulting unlabeled sos expressions are the same up to removing isolated vertices, which does not affect homomorphism densities.

Let $f$ be a graph combination in $\A_\emptyset$. Suppose that we have a sos expression $f=\sum [[a_i^2]]$ where each $a_i$ is in $\A_L$ where $L=[k]=\{1,\dots,k\}$ and where $a_i=\sum_{j=1}^t \alpha_{ij} H_{ij}$. For each constituent partially labeled graph $H_{ij}$ with label set $L_{H_{ij}}$, define $\bar{H}_{ij}$ to be the graph $H_{ij}$ with $k-|L_{H_{ij}}|$ labeled isolated vertices attached, where new vertices are labeled with the labels from $[k]\backslash L_{H_{ij}}$. We thus obtain a sos $\sum \bar{a}^2_i$ in the Lov\'{a}sz-Szegedy algebra of $k$-labeled quantum graphs. Observe that after unlabeling the expressions, $\sum [[a_i^2]]$ and $\sum [[\bar{a}^2_i]]$ are equivalent up to adding or removing isolated vertices. 

Similarly, start with a sos expression in the algebra of $k$-labeled quantum graphs $\sum \bar{a}^2_i$. For a constituent partially $k$-labeled graph $\bar{G}$ define $G$ to be the partially labeled graph obtained from $\bar{G}$ by removing the isolated vertices. This gives a sum of squares expression in $\mathcal{A}$, which agrees with the Lov\'{a}sz-Szegedy sum of squares up to removing isolated vertices.

\subsection*{Relation to Hatami-Norine.}

A variant of Lov\'{a}sz-Szegedy gluing algebra was defined by Hatami and Norine in \cite{HN11}. Our gluing algebra $\mathcal{A}$ is isomorphic to the quotient algebra in their paper. The partially labeled graph $H$ with no isolated vertices is just an explicit coset representative of the quotient by the ideal $\mathcal{K}$  generated by all differences of the form empty graph minus $1$-vertex graph with a label (or unlabeled). We refer to \cite{HN11} for more details.

\subsection*{Relation to Razborov's Flag Algebras.}

A different algebra was used in the work of Razborov in \cite{Razborov07}. There, partially labeled graphs are called \emph{flags}. The main difference is that flag algebras are concerned with \textit{induced subgraph density}, while homomorphism density is known to be asymptotically equal to \textit{non-induced subgraph density}. A well-known M\"{o}bius transformation relates induced and non-induced subgraph densities via a change of basis. Multiplication in the flag algebras looks syntactically different from the gluing algebra, however after passing through the M\"{o}bius transformation and its inverse, the two multiplications are the same. Therefore Cauchy-Schwarz proofs in the flag algebras are equivalent to sos proofs in the gluing algebra. We refer to \cite{RSST} and \cite{RST} for more details.

\bibliographystyle{alpha}
\bibliography{references}{}

\end{document}